\documentclass[a4paper,12pt]{article}
\usepackage[hiresbb]{graphicx}
\usepackage{amsmath,amsthm,amssymb}
\usepackage{natbib,color,bm}
\numberwithin{equation}{section}
\usepackage{setspace} 
\usepackage{diagbox}
\usepackage[top=30truemm,bottom=30truemm,left=25truemm,right=25truemm]{geometry}

\usepackage{algorithmicx}%
\usepackage{algpseudocode}%
\usepackage{listings}%
\usepackage{anyfontsize}%
\usepackage{diagbox}
\usepackage{booktabs}
\usepackage{authblk}
\usepackage{multirow}
\usepackage{caption}
\usepackage{hyperref}
\hypersetup{%
  colorlinks=true,
  linkcolor=blue,  
  citecolor=black,   
  urlcolor=blue     
}

\makeatletter
\AtBeginDocument{%
  \@ifundefined{NAT@hyper@}{}{%
    \let\NAT@orig@hyper@\NAT@hyper@
    \renewcommand*{\NAT@hyper@}[1]{%
      \begingroup
        \let\NAT@orig@date\NAT@date
        \def\NAT@date{\textcolor{blue}{\NAT@orig@date}}%
        \NAT@orig@hyper@{#1}%
      \endgroup
    }%
  }%
}
\makeatother
 
\theoremstyle{plain} 
\newtheorem{thm}{Theorem}[section]%
\newtheorem{prop}[thm]{Proposition}%
\newtheorem{lem}[thm]{Lemma}%

\theoremstyle{definition}

\newtheorem{rem}{Remark}%

\allowdisplaybreaks

\numberwithin{equation}{section}

\title{\Large\textbf{Third-order Halley-type iterative method with positive and bounded correction function}}
\author[1,2]{Hirai Mukasa}

\affil[1]{Department of Creative Design Engineering, National Institute of Technology, Kagoshima College}
\affil[2]{Graduate School of Mathematics, Kyushu University}

\date{}

\begin{document}
\maketitle

\begin{abstract}
In this paper, we propose a new one-point iterative method for finding simple roots of nonlinear equations by modifying Halley’s method. 
Its correction function is positive and bounded on $\mathbb{R}$ and matches the local expansion of Halley’s correction function near a simple root.
The method avoids the singularity and sign reversal of Halley’s correction function while retaining at least third-order local convergence.
We also establish sufficient conditions for global convergence.
Numerical experiments show that the proposed method achieves high convergence success rates and favorable iteration counts over a wide range of initial guesses.
Further experiments near the singularity of Halley’s correction function demonstrate robust convergence behavior of the proposed method.
An application to the van der Waals equation illustrates the effectiveness of the proposed method.
\\
\\
\textbf{Keywords}: Global convergence, Halley's method, Nonlinear equations, One-point iterative method, Third-order convergence\\
\textbf{2020 Mathematics Subject Classification}: 65H05, 65H20
\end{abstract}

\maketitle

\section{Introduction}\label{sec:1}	
Solving nonlinear equations $f(x)=0$ is a classical problem in numerical analysis with applications in a wide range of scientific and engineering fields \citep{RefW96,RefLZ04,RefD05}.
Since closed-form solutions are generally unavailable, iterative methods are widely used to compute approximate solutions.
Among these methods, Newton’s method is one of the most fundamental and widely used.
It is given by
\begin{align*}
x_{n+1} = x_n-\dfrac{f(x_n)}{f'(x_n)}, \quad n = 0,1,\ldots,
\end{align*}
which converges quadratically in a neighborhood of a simple root $\alpha$ \citep{RefT64, RefO73}.
However, the method may fail to converge when the initial guess is far from the root \citep{RefY00}.

Halley’s method is a one-point iterative method with a higher local order of convergence than Newton’s method.
The iteration is given by
\begin{align*}
x_{n+1} = x_n - \dfrac{2f(x_n)f'(x_n)} {2f'(x_n)^2-f(x_n)f''(x_n)},
\end{align*}
which converges cubically to $\alpha$ when the initial guess is sufficiently close to the root.
Its geometric properties and convergence behavior have also been extensively studied \citep{RefH91,RefM97}.
Other third-order iterative methods, including the Chebyshev, super-Halley, Fang, Ostrowski, and Noor methods, have also been proposed and analyzed from the viewpoints of local and global convergence \citep{RefHP77,RefGH01,RefF06,RefNN07,RefNS13,RefB20}.

\cite{RefC15} provided the following unified representation of a broad class of third-order one-point iterative methods:
\begin{align}\label{GE}
x_{n+1} = x_n-C(L_f(x_n))\dfrac{f(x_n)}{f'(x_n)},
\end{align}
where
\begin{align*}
L_f(x) = \dfrac{f(x)f''(x)}{f'(x)^2}.
\end{align*}
For Halley’s method, the correction function in \eqref{GE} is given by
\begin{align}\label{halley}
C_H(L_f(x)) = \dfrac{2}{2-L_f(x)}.
\end{align}
Although Halley’s method has excellent local convergence properties, its correction function has undesirable global features.
In particular, $C_H(L_f(x))$ has a pole at $L_f(x)=2$, where
\begin{align*}
\lim_{L_f(x)\rightarrow2^-}C_H(L_f(x))= \infty, \quad \lim_{L_f(x)\rightarrow2^+}C_H(L_f(x))=-\infty.
\end{align*}
Moreover, if $L_f(x)>2$, then $C_H(L_f(x))<0$, so the iteration proceeds in the direction opposite to the Newton step.
Thus, an iterate entering a region where $L_f(x)$ is close to or greater than two may undergo an excessively large correction or a reversal of the Newton direction.
These features do not affect the cubic local convergence of Halley’s method, since $L_f(x)\to0$ as $x\to\alpha$, but they can adversely affect its behavior when the iterate is far from the root.
In particular, \cite{RefM23} proposed the Extended Newton method with a positive correction function on $\mathbb{R}$ to address the undesirable behavior of $C_H(L_f(x))$ for $L_f(x)>2$. 
A method based on Pad\'e approximation was also considered to limit excessively large corrections.
However, both correction functions remain unbounded on $\mathbb{R}$.

The properties of the correction functions also vary among other third-order iterative methods.
For example, the correction functions of the Chebyshev and Noor methods become negative as $L_f(x)\to-\infty$ and are unbounded, whereas the super-Halley correction function has a pole on $\mathbb{R}$ \citep{RefO73,RefGH01,RefNN07}. 
The Fang and Ostrowski correction functions involve square roots and are therefore not real-valued on all of $\mathbb{R}$, although the former is positive and bounded on its real-valued domain \citep{RefO73,RefF06}.
Table~\ref{table1} summarizes relevant properties of correction functions for these one-point iterative methods.
Here, ``Real-valued on $\mathbb{R}$'' means that $C(L_f(x))\in\mathbb{R}$ for all $L_f(x)\in\mathbb{R}$, whereas ``Positive'' and ``Bounded'' refer to positivity and boundedness, respectively, on the real-valued domain of the correction function.
``Halley matching'' means that
\begin{align}\label{HM}
C(L_f(x)) = C_H(L_f(x))+O(L_f(x)^4) \quad (L_f(x) \rightarrow 0).
\end{align}
\begin{table}[t]
\centering
\caption{Properties of correction functions for third-order one-point iterative methods.}
\label{table1}
\renewcommand{\arraystretch}{1.15}
\begin{tabular}{lcccc}
\toprule
Method
& Real-valued on $\mathbb{R}$
& Positive
& Bounded
& Halley matching
\\
\midrule
Halley
& $\times$
& $\times$
& $\times$
& $\checkmark$
\\
Chebyshev
& $\checkmark$
& $\times$
& $\times$
& $\times$
\\
super-Halley
& $\times$
& $\times$
& $\times$
& $\times$
\\
Fang
& $\times$
& $\checkmark$
& $\checkmark$
& $\times$
\\
Ostrowski
& $\times$
& $\checkmark$
& $\times$
& $\times$
\\
Noor
& $\checkmark$
& $\times$
& $\times$
& $\times$
\\
Extended Newton
& $\checkmark$
& $\checkmark$
& $\times$
& $\times$
\\
Pad\'e-type
& $\checkmark$
& $\checkmark$
& $\times$
& $\times$
\\
Proposed
& $\checkmark$
& $\checkmark$
& $\checkmark$
& $\checkmark$
\\
\bottomrule
\end{tabular}
\end{table}
As shown in Table~\ref{table1}, none of the methods considered here satisfies all four properties.
In this study, we construct a correction function that is positive and bounded on $\mathbb{R}$ and whose local expansion matches that of Halley’s correction function near the root, and use it to define a new one-point iterative method.
The higher-order local matching near $L_f(x)=0$ preserves the cubic convergence behavior of Halley’s method, while the global shape of the proposed correction function is designed to improve robustness for initial guesses far from the root.
We also establish sufficient conditions for the global convergence of the proposed method.

The rest of the paper is organized as follows.
In Section~\ref{sec:2}, we construct the proposed correction function and derive the corresponding iterative method.
Section~\ref{sec:3} establishes the fundamental properties of the correction function and investigates the local and global convergence of the method.
Section~\ref{sec:4} presents numerical comparisons with existing third-order iterative methods and an application to the van der Waals equation.
Finally, Section~\ref{sec:5} gives the concluding remarks.

\section{Proposed method}\label{sec:2}	
In this section, we derive a correction function that is positive and bounded on $\mathbb{R}$  and preserves the local behavior of Halley’s correction function near the root.
Consider the third-order one-point iterative method \eqref{GE} introduced in Section \ref{sec:1}.
Halley’s correction function \eqref{halley} can be rewritten as
\begin{align*}
C_H(L_f(x)) = \dfrac{2}{1 + (1 - L_f(x))}.
\end{align*}
Replacing $1-L_f(x)$ by a function $B(L_f(x))$, we define
\begin{align*}
C_*(L_f(x)) = \dfrac{2}{1 + B(L_f(x))}.
\end{align*}
We require $B(L_f(x))$ to satisfy the following three conditions:
\begin{align}
& \textup{(i)} \ B(L_f(x)) > 0 \quad (L_f(x) \in \mathbb{R}). \label{D1}\\
& \textup{(ii)} \ B(L_f(x)) = 1 -L_f(x) + O(L_f(x)^4) \quad (L_f(x) \rightarrow 0). \label{D2}\\
& \textup{(iii)} \ \dfrac{B(L_f(x))}{1 - L_f(x)} \rightarrow 1 \quad (L_f(x) \rightarrow -\infty). \label{D3}
\end{align}
Condition \eqref{D1} guarantees that the proposed correction function is positive and bounded on $\mathbb{R}$.
Indeed, $0 < C_*(L_f(x)) < 2$ for all $L_f(x)\in\mathbb{R}$.
Condition \eqref{D2} yields the local matching property \eqref{HM}, since $L_f(x)\to0$ as $x\to\alpha$.
Condition \eqref{D3} yields the asymptotic relation
\begin{align*}
\dfrac{C_*(L_f(x))}{C_H(L_f(x))} \rightarrow 1 \quad (L_f(x)\to-\infty).
\end{align*}

To satisfy \eqref{D1}, we first seek a function that is positive on $\mathbb{R}$ and has the same constant and linear terms as $1-L_f(x)$. Consider the one-parameter family
\begin{align*}
R_{\mu}(L_f(x)) = \sqrt{1+\mu L_f(x)^2}-L_f(x), \quad \mu > 0.
\end{align*}
The condition $R_{\mu}(L_f(x)) > 0$ for all $L_f(x)\in\mathbb{R}$ holds if and only if $\mu \geq 1$.
By Taylor series expansion about $L_f(x) = 0$, we have 
\begin{align}\label{taylor}
R_{\mu}(L_f(x)) = 1-L_f(x) +\dfrac{\mu}{2}L_f(x)^2 -\dfrac{\mu^2}{8}L_f(x)^4 + O(L_f(x)^6).
\end{align}
Among $\mu\geq1$, the choice $\mu=1$ minimizes the discrepancy in the quadratic term from $1-L_f(x)$.
Therefore, we set
\begin{align*}
R(L_f(x)) = \sqrt{1 + L_f(x)^2} - L_f(x).
\end{align*}
However, the quadratic term in \eqref{taylor} does not coincide with that of $1-L_f(x)$.
Thus, we introduce a rational function $A(L_f(x))$ and construct $B(L_f(x)) = A(L_f(x))R(L_f(x))$ so that \eqref{D2} and \eqref{D3} are satisfied.
Suppose that
\begin{align*}
A(L_f(x)) = 1 + a_1 L_f(x) + a_2 L_f(x)^2 + a_3 L_f(x)^3 + O(L_f(x)^4).
\end{align*}
Using the Taylor expansion of $R(L_f(x))$, we obtain
\begin{align*}
B(L_f(x)) &= 1 + (-1 + a_1)L_f(x) + \left(\dfrac{1}{2} - a_1 + a_2\right)L_f(x)^2\\
& \quad + \left(\dfrac{1}{2}a_1 - a_2 + a_3\right)L_f(x)^3 + O(L_f(x)^4).
\end{align*}
Comparing the coefficients with \eqref{D2} gives
\begin{align*}
\left\{ \,
 \begin{aligned}
& -1 + a_1 = -1\\
& \dfrac{1}{2} - a_1 + a_2 = 0\\
& \dfrac{1}{2}a_1 - a_2 + a_3 = 0
\end{aligned}
    \right.
\quad \Leftrightarrow \quad 
\left\{ \,
 \begin{aligned}
& a_1 = 0\\
& a_2 = -\dfrac{1}{2}\\
& a_3 = -\dfrac{1}{2} 
\end{aligned}
    \right..
\end{align*}
Thus,
\begin{align}\label{eq:Alocal}
A(L_f(x)) = 1 - \dfrac{1}{2} L_f(x)^2 - \dfrac{1}{2} L_f(x)^3 + O(L_f(x)^4).
\end{align}
Here, we consider the following rational function of the lowest degree that satisfies \eqref{eq:Alocal} and is positive on $\mathbb{R}$:
\begin{align}\label{A}
A(L_f(x)) = \dfrac{1 + p_1 L_f(x) + p_2 L_f(x)^2}{1 + q_1 L_f(x) + q_2 L_f(x)^2}.
\end{align}
Taylor series expansion about $L_f(x) = 0$ yields
\begin{align*}
A(L_f(x))
&= 1+(p_1 - q_1)L_f(x) + (p_2 - p_1 q_1 + q_1^2 - q_2)L_f(x)^2\\
&\quad + \left[2q_1 q_2 - q_1^3 + p_1(q_1^2 - q_2)
- p_2 q_1\right]L_f(x)^3 + O(L_f(x)^4).
\end{align*}
Comparing the coefficients with those in \eqref{eq:Alocal}, we have
\begin{align*}
\left\{
\begin{aligned}
& p_1 - q_1=0\\
& p_2 - p_1 q_1 + q_1^2 - q_2 = -\dfrac{1}{2}\\
& 2q_1 q_2 - q_1^3 + p_1(q_1^2-q_2) - p_2 q_1 = -\dfrac{1}{2}
\end{aligned}
\right.
\quad\Leftrightarrow\quad
\left\{
\begin{aligned}
& p_1=q_1=-1\\
& p_2=q_2-\dfrac{1}{2}
\end{aligned}
\right..
\end{align*}

Next, we use condition \eqref{D3} to uniquely determine the pair $(p_2,q_2)$. Let $t=-L_f(x)$. Since $t\to\infty$ as $L_f(x)\to-\infty$, we obtain
\begin{align*}
\lim_{L_f(x)\rightarrow-\infty}\dfrac{R(L_f(x))}{1 - L_f(x)} = \lim_{t \rightarrow \infty} \dfrac{\sqrt{1 + t^2} + t}{1 + t} = 2.
\end{align*}
Since
\begin{align*}
\dfrac{B(L_f(x))}{1 - L_f(x)} = A(L_f(x)) \dfrac{R(L_f(x))}{1 - L_f(x)},
\end{align*}
condition \eqref{D3} requires
\begin{align*}
\lim_{L_f(x)\rightarrow-\infty} A(L_f(x)) = \dfrac{1}{2}.
\end{align*}
It follows from \eqref{A} and $p_2 = q_2 - 1/2$ that
\begin{align*}
\left\{
\begin{aligned}
& \dfrac{p_2}{q_2} = \dfrac{1}{2}\\
& p_2 = q_2 - \dfrac{1}{2}
\end{aligned}
\right. \quad \Leftrightarrow \quad 
\left\{
\begin{aligned}
& p_2 = \dfrac{1}{2}\\
& q_2 = 1
\end{aligned}
\right..
\end{align*}
Thus, within the rational form \eqref{A}, $A(L_f(x))$ is uniquely determined as
\begin{align*}
A(L_f(x)) = \dfrac{1 - L_f(x) + \frac{1}{2}L_f(x)^2}{1 - L_f(x) + L_f(x)^2}.
\end{align*}
Both the numerator and denominator are positive on $\mathbb{R}$, because
\begin{align*}
1 - L_f(x) + \dfrac{1}{2}L_f(x)^2 = \dfrac{1}{2}\left[(L_f(x)-1)^2 + 1\right] > 0
\end{align*}
and
\begin{align*}
1 - L_f(x) + L_f(x)^2 = \left(L_f(x)-\dfrac{1}{2}\right)^2 + \dfrac{3}{4} >0.
\end{align*}
Thus, $A(L_f(x)) > 0$ for all $L_f(x)\in\mathbb{R}$, and $B(L_f(x))$ is given by
\begin{align*}
B(L_f(x)) = A(L_f(x))R(L_f(x)) = \dfrac{1 - L_f(x) + \frac{1}{2}L_f(x)^2}{1 - L_f(x) + L_f(x)^2} \left(\sqrt{1 + L_f(x)^2} - L_f(x)\right).
\end{align*}
The proposed correction function is
\begin{align*}
C_*(L_f(x)) = \dfrac{2}{1 + \dfrac{1 - L_f(x) + \frac{1}{2}L_f(x)^2}{1 - L_f(x) + L_f(x)^2} (\sqrt{1 + L_f(x)^2} - L_f(x))}.
\end{align*}
Then the corresponding iterative method is defined by
\begin{align}\label{im}
x_{n+1} = x_n - C_*(L_f(x_n))\dfrac{f(x_n)}{f'(x_n)}.
\end{align}

\begin{rem}
For large positive values of $L_f(x)$, direct evaluation of $\sqrt{1+L_f(x)^2}-L_f(x)$ may suffer from loss of significance. To improve numerical stability, $R(L_f(x))$ can instead be evaluated as
\begin{align*}
R(L_f(x)) =
\begin{cases}
\dfrac{1}{\sqrt{1+L_f(x)^2}+L_f(x)},
& L_f(x)\geq 0\\[2mm]
\sqrt{1+L_f(x)^2}-L_f(x),
& L_f(x)<0
\end{cases}.
\end{align*}
These two expressions are mathematically equivalent.
\end{rem}

\section{Theoretical properties}\label{sec:3}
In this section, we establish fundamental properties of the correction function $C_*(L_f(x))$ proposed in Section \ref{sec:2} and investigate the local and global convergence of the corresponding iterative method.

First, Proposition~\ref{pr1} shows that $C_*(L_f(x))$ is well-defined, positive, and bounded on $\mathbb{R}$.
\begin{prop}\label{pr1}
The correction function $C_*(L_f(x))$ is well-defined on $\mathbb{R}$ and satisfies $0 < C_*(L_f(x)) < 2$ for all $L_f(x) \in \mathbb{R}$.
Moreover, 
\begin{align*}
\lim_{L_f(x) \rightarrow -\infty}C_*(L_f(x)) = 0, \quad \lim_{L_f(x) \rightarrow \infty}C_*(L_f(x)) = 2,
\end{align*}
and hence $C_*(\mathbb{R}) = (0,2)$.
\end{prop}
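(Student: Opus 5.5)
Write $L=L_f(x)$ and treat $C_*$ as a function of the real variable $L$. The plan has three steps: prove that $B$ is positive and finite everywhere, compute the limits of $B$ at $\pm\infty$, and finish with continuity and the intermediate value theorem.

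\textbf{Well-definedness and the bound $0<C_*<2$.} The denominator $1-L+L^2=(L-\tfrac12)^2+\tfrac34$ never vanishes, and $\sqrt{1+L^2}$ is real for every $L$. Hence $A(L)$, $R(L)$ and $B(L)=A(L)R(L)$ are finite, continuous real functions on $\mathbb{R}$. Section~\ref{sec:2} already showed that the numerator and denominator of $A$ are positive, so $A(L)>0$. For $R$ there are two cases:
\begin{itemize}
\item if $L\le 0$, then $R(L)=\sqrt{1+L^2}+|L|>0$;
\item if $L>0$, then $\sqrt{1+L^2}>\sqrt{L^2}=L$, so again $R(L)>0$.
\end{itemize}
Therefore $B(L)>0$ for all $L$. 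Then $1+B(L)>1$, which makes the denominator of $C_*$ nonzero and gives $0<C_*(L)=2/(1+B(L))<2$. This is exactly condition \eqref{D1} as used in Section~\ref{sec:2}.

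\textbf{Limits.} As $L\to\infty$, $A(L)\to \tfrac12$ because the leading coefficients are $\tfrac12$ and $1$. For $R$ I use the rationalized form from the Remark, $R(L)=1/(\sqrt{1+L^2}+L)\to 0$. Hence $B(L)\to0$ and $C_*(L)\to 2$. As $L\to-\infty$, again $A(L)\to\tfrac12$, while $R(L)=\sqrt{1+L^2}-L\ge -L\to\infty$. So $B(L)\to\infty$ and $C_*(L)\to0$. Equivalently, this follows from \eqref{D3}: $B(L)\sim 1-L\to\infty$.

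\textbf{The range.} $C_*$ is continuous on the connected set $\mathbb{R}$, so its image is an interval. By the first step this interval lies inside $(0,2)$. By the second step it contains values arbitrarily close to $0$ and arbitrarily close to $2$. The intermediate value theorem then gives $C_*(\mathbb{R})\supseteq(0,2)$, and combined with the bound, $C_*(\mathbb{R})=(0,2)$.

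None of these steps is a serious obstacle. The only point that needs care is the limit of $R$ as $L\to+\infty$: this is an $\infty-\infty$ form, and it becomes transparent once $R$ is rationalized as in the Remark.
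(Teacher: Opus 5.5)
Your proof is correct and follows essentially the same route as the paper. Positivity of $A$ and $R$ gives $1+B>1$, hence $0<C_*<2$; the limits $A\to\tfrac12$, $R\to 0$ as $L_f(x)\to\infty$ and $R\to\infty$ as $L_f(x)\to-\infty$, together with continuity, then give $C_*(\mathbb{R})=(0,2)$. You simply make explicit the details the paper leaves implicit: the case analysis showing $R>0$, the rationalized form of $R$ for the limit at $+\infty$, and the connectedness argument for the range.
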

\begin{proof}
As shown in Section~\ref{sec:2}, $A(L_f(x))>0$ and $R(L_f(x))>0$ for all $L_f(x)\in\mathbb{R}$. 
Hence, $1 + B(L_f(x)) > 1$, so $C_*(L_f(x))$ is well-defined and positive on $\mathbb{R}$.
From
\begin{align*}
2 - C_*(L_f(x)) = \dfrac{2A(L_f(x))R(L_f(x))}{1 + A(L_f(x))R(L_f(x))} >0,
\end{align*}
we obtain $0 < C_*(L_f(x)) <2$ for all $L_f(x) \in \mathbb{R}$.

Next, since
\begin{align*}
\lim_{L_f(x) \rightarrow \pm \infty}A(L_f(x)) = \dfrac{1}{2}, \quad \lim_{L_f(x) \rightarrow \infty}R(L_f(x)) = 0, \quad \lim_{L_f(x) \rightarrow - \infty}R(L_f(x)) = \infty,
\end{align*} 
it follows that $\lim_{L_f(x) \rightarrow -\infty}C_*(L_f(x)) = 0$ and $\lim_{L_f(x) \rightarrow \infty}C_*(L_f(x)) = 2$.
Since $C_*(L_f(x))$ is continuous on $\mathbb{R}$, it follows that $C_*(\mathbb{R}) = (0,2)$.
This completes the proof.
\end{proof}

Next, we establish a pointwise comparison between the correction functions of the proposed method and Halley’s method.
\begin{prop}\label{pr_compare}
The following statements hold.\\
(i) If $L_f(x)<2$, then $0 < C_*(L_f(x)) \leq C_H(L_f(x))$.
Moreover, equality holds if and only if $L_f(x) = 0$.\\
(ii) If $L_f(x)\geq2$, then $0 < C_*(L_f(x)) < 2$.
In contrast, $C_H(L_f(x))$ is undefined at $L_f(x) = 2$ and satisfies $C_H(L_f(x))<0$ for $L_f(x) > 2$.
\end{prop}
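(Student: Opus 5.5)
The plan is to reduce part (i) to one polynomial inequality. Part (ii) will then follow directly from Proposition~\ref{pr1}.

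\textbf{Reduction.} Write $t=L_f(x)$ and $s=\sqrt{1+t^2}$. Positivity of $C_*$ is already given by Proposition~\ref{pr1}. For $t<2$ we have $2-t>0$ and $1+B(t)>0$. Hence
\begin{align*}
C_*(t)\le C_H(t)\iff 1+B(t)\ge 2-t\iff B(t)\ge 1-t,
\end{align*}
and equality in the first inequality corresponds exactly to equality in the last. So it suffices to show $B(t)\ge 1-t$ for all $t<2$, with equality only at $t=0$.

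\textbf{Case $1\le t<2$.} Here $1-t\le 0<B(t)$ by condition~\eqref{D1}. The inequality is therefore strict, and $t\ne 0$ as required.

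\textbf{Case $t<1$.} Multiply by the positive denominator $1-t+t^2$ and set $P=1-t+\tfrac12 t^2>0$. Since $R(t)=s-t$, the claim becomes $P(s-t)\ge (1-t)(1-t+t^2)$. Moving the $-Pt$ term to the right gives
\begin{align*}
Ps\ \ge\ Q,\qquad Q:=(1-t)(1-t+t^2)+tP = 1-t+t^2-\tfrac12 t^3 = P+\tfrac12 t^2(1-t).
\end{align*}
If $Q\le 0$, the inequality is strict because $Ps>0$; also $Q(0)=1>0$, so $t\ne0$ here. If $Q>0$, both sides are positive, so it is equivalent to square: $P^2(1+t^2)-Q^2\ge 0$. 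I will expand using $Q=P+\tfrac12t^2(1-t)$. This gives
\begin{align*}
P^2(1+t^2)-Q^2 = t^2\Bigl[P^2-(1-t)P-\tfrac14 t^2(1-t)^2\Bigr].
\end{align*}
Next, $P-(1-t)=\tfrac12t^2$, so the bracket equals $\tfrac12 t^2P-\tfrac14t^2(1-t)^2$. Therefore
\begin{align*}
P^2(1+t^2)-Q^2=\tfrac14 t^4\bigl[2P-(1-t)^2\bigr]=\tfrac14 t^4 .
\end{align*}
This is nonnegative and vanishes only at $t=0$. Every step is an equivalence, so the equality case is exactly $L_f(x)=0$, which proves (i). It is also consistent with the $O(L_f(x)^4)$ matching in \eqref{HM}.

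\textbf{Part (ii).} The bound $0<C_*<2$ for all real $L_f(x)$, in particular for $L_f(x)\ge2$, is exactly Proposition~\ref{pr1}. The claims about $C_H$ are read off from \eqref{halley}: the denominator $2-L_f(x)$ vanishes at $L_f(x)=2$ and is negative for $L_f(x)>2$.

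\textbf{Main obstacle.} I expect the only real difficulty to be the algebra in the squaring step for $t<1$. The key is to write $Q$ as $P$ plus a correction. Then the difference collapses to $\tfrac14t^4$ and is visibly nonnegative, which avoids a messy degree-six polynomial.
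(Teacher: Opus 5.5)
Your proof is correct, but it establishes the key inequality $B(L_f(x))\ge 1-L_f(x)$ by a different algebraic route from the paper's.

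The paper inverts the definition of $R$, writing $L_f(x)=\bigl(1-R(L_f(x))^2\bigr)/\bigl(2R(L_f(x))\bigr)$. It uses this to rationalize the gap into the closed form
\begin{align*}
B(L_f(x))-(1-L_f(x))=\dfrac{[R(L_f(x))-1]^4}{8R(L_f(x))^3\,[L_f(x)^2-L_f(x)+1]}.
\end{align*}
This is visibly nonnegative for every real $L_f(x)$, with no case analysis. The equality case then needs the extra translation $R(L_f(x))=1\iff L_f(x)=0$.

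You instead isolate the radical as $Ps\ge Q$ and square. That costs you a case split on the sign of $Q$. Your separate case $1\le t<2$ is redundant: nothing in the $t<1$ argument uses $t<1$, so your argument in fact proves $B(t)\ge 1-t$ on all of $\mathbb{R}$, as the paper's identity does. In return, the difference of squares collapses to $P^2(1+t^2)-Q^2=\tfrac14t^4$, so the equality case $t=0$ is read off directly in the original variable.

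The two routes meet as follows. We have $Ps+Q>0$ for every $t$; when $Q\le0$ this follows from $(Ps-Q)(Ps+Q)=\tfrac14t^4>0$ together with $Ps-Q>0$. Dividing your identity by $Ps+Q$ therefore gives $B(t)-(1-t)=t^4/\bigl[4(Ps+Q)(1-t+t^2)\bigr]$. This is the same fourth-order gap that the paper expresses through $(R-1)^4$. The paper's form buys an unconditional, sign-evident identity; yours avoids the change of variables at the price of a squaring step with a sign check.
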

\begin{proof}
From the definition of $R(L_f(x))$, $L_f(x)$ can be expressed as
\begin{align*}
&R(L_f(x)) + L_f(x) = \sqrt{1 + L_f(x)^2}\\
\Rightarrow \quad & R(L_f(x))^2 + 2R(L_f(x))L_f(x) + L_f(x)^2 = 1 + L_f(x)^2\\
\Leftrightarrow \quad &L_f(x) = \dfrac{1-R(L_f(x))^2}{2R(L_f(x))}.
\end{align*}
Using this identity, we obtain
\begin{align*}
&B(L_f(x))-(1-L_f(x))\\
&= \dfrac{\left(1 - L_f(x) + \frac{L_f(x)^2}{2}\right) R(L_f(x)) - (1- L_f(x))(1 - L_f(x) + L_f(x)^2)}{1 - L_f(x) + L_f(x)^2}\\
&=\dfrac{[R(L_f(x))-1]^4}
{8R(L_f(x))^3[L_f(x)^2-L_f(x)+1]}\\
&\geq 0.
\end{align*}
(i) Suppose that $L_f(x)<2$. 
Since 
\begin{align*}
1+B(L_f(x)) \geq 2-L_f(x) > 0,
\end{align*}
we have
\begin{align*}
C_*(L_f(x)) = \dfrac{2}{1+B(L_f(x))} \leq \dfrac{2}{2-L_f(x)} = C_H(L_f(x)).
\end{align*}
It follows from Proposition~\ref{pr1} that $C_*(L_f(x)) > 0$, and thus
$0 < C_*(L_f(x)) \leq C_H(L_f(x))$.
Moreover, $C_*(L_f(x)) = C_H(L_f(x))$ holds if and only if
\begin{align*}
&R(L_f(x)) = 1\\
\Leftrightarrow \quad & 1+L_f(x)^2 = (1 - L_f(x))^2\\
\Leftrightarrow \quad & L_f(x) = 0.
\end{align*}
(ii) Suppose that $L_f(x) \geq 2$.
By Proposition~\ref{pr1}, $0 < C_*(L_f(x)) < 2$ for all $L_f(x) \in \mathbb{R}$.
In contrast, $C_H(L_f(x))$ is undefined at $L_f(x)=2$, since the denominator in \eqref{halley} vanishes.
Moreover, $C_H(L_f(x))<0$ holds for $L_f(x)>2$.
This completes the proof.
\end{proof}

\begin{rem}
Proposition~\ref{pr_compare} shows that, for $L_f(x)<2$, the proposed method preserves the direction of the Newton step while using a correction factor no larger than Halley’s.
For $L_f(x)>2$, it avoids the singularity and sign reversal associated with Halley’s correction function.

To illustrate the effect of sign reversal, let $\alpha$ be a simple root of $f(x)$ and assume that $f'(x) \neq 0$ on an interval containing both $x_n$ and $\alpha$. Define
\begin{align*}
r(x_n)=\dfrac{f(x_n)}{f'(x_n)(x_n-  \alpha)}.
\end{align*}
By the mean value theorem, $r(x_n) > 0$. 
From \eqref{GE}, we have
\begin{align*}
x_{n+1} - \alpha = \left[1-C(L_f(x_n))r(x_n)\right](x_n-\alpha).
\end{align*}
If $C(L_f(x_n)) < 0$, then $|x_{n+1} - \alpha| > |x_n - \alpha|$.
Thus, a negative correction factor increases the distance from the root in the next iteration.
Such an increase due to sign reversal cannot occur for the proposed method, because $C_*(L_f(x))>0$.
\end{rem}

We next examine the local relationship between the proposed method and Halley’s method near $L_f(x)=0$.
\begin{prop}\label{pr2}
The condition \eqref{HM} holds as $L_f(x) \rightarrow 0$.
\end{prop}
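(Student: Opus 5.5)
Write $L=L_f(x)$ throughout. The plan is to reduce \eqref{HM} to the statement that $B(L)-(1-L)=O(L^4)$. Then a difference of the two correction functions gives the claim directly. Indeed,
\begin{align*}
C_*(L)-C_H(L) = \dfrac{2}{1+B(L)}-\dfrac{2}{2-L} = \dfrac{2\left[(1-L)-B(L)\right]}{(1+B(L))(2-L)},
\end{align*}
and near $L=0$ the denominator tends to $2\cdot 2=4$, since $B(0)=A(0)R(0)=1$. Hence $C_*(L)-C_H(L)=O(B(L)-(1-L))$ as $L\to0$. Everything therefore rests on estimating $B(L)-(1-L)$.

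For that estimate I would use the exact identity established in the proof of Proposition~\ref{pr_compare}:
\begin{align*}
B(L)-(1-L)=\dfrac{[R(L)-1]^4}{8R(L)^3\,(L^2-L+1)}.
\end{align*}
Since $R(L)=\sqrt{1+L^2}-L$ is smooth with $R(0)=1$, we have $R(L)-1=-L+O(L^2)=O(L)$, so the numerator is $O(L^4)$. The denominator tends to $8\cdot1\cdot1=8$, so it is bounded away from zero near $L=0$. Thus $B(L)-(1-L)=O(L^4)$, and in fact the leading term is $L^4/8$. Combining this with the first paragraph gives $C_*(L)=C_H(L)+O(L^4)$.

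As a cross-check, the construction in Section~\ref{sec:2} was designed so that this holds. The parameters $p_1=q_1=-1$, $p_2=1/2$, $q_2=1$ were solved precisely so that the coefficients of $L$, $L^2$ and $L^3$ in $B$ agree with those of $1-L$, which is condition \eqref{D2}. One could instead redo that Taylor expansion directly, but the identity route avoids any truncation bookkeeping.

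The main obstacle is making sure the identity from Proposition~\ref{pr_compare} is trustworthy. It was obtained by substituting $L=(1-R^2)/(2R)$, so if I doubt it I would re-verify the algebra by clearing denominators. The rest is routine.
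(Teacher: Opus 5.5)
Your proof is correct, and it takes a different route from the paper's.

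The paper argues by truncated Taylor bookkeeping. It multiplies $A(L)=1-\frac12L^2-\frac12L^3+O(L^4)$, taken from the coefficient matching in Section~\ref{sec:2}, by $R(L)=1-L+\frac12L^2+O(L^4)$ to get $B(L)=1-L+O(L^4)$. It then expands $2/(1+B)$ and $2/(2-L)$ separately as power series and compares coefficients through $L^3$.

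You instead combine two exact relations:
\begin{itemize}
\item the closed form $B(L)-(1-L)=(R-1)^4/[8R^3(L^2-L+1)]$ from the proof of Proposition~\ref{pr_compare};
\item the exact difference formula for $C_*-C_H$, whose denominator tends to $4$.
\end{itemize}
The identity you were worried about does check out. After substituting $L=(1-R^2)/(2R)$ and clearing denominators, the numerator reduces to exactly $R^4-4R^3+6R^2-4R+1=(R-1)^4$. There is no circularity, since that identity is pure algebra proved before Proposition~\ref{pr2}.

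Your route makes the fourth-order agreement visible as a literal fourth power and avoids truncation errors. It also gives the leading term $C_*(L)-C_H(L)=-\frac{1}{16}L^4+O(L^5)$ for free, which the paper derives separately in the following remark.

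The paper's route is self-contained from the construction. It directly produces the coefficients $1+\frac12L+\frac14L^2+\frac18L^3$ of $C_*$ that are quoted in the proof of Theorem~\ref{thm1}. These also follow from your result by expanding $C_H$.
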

\begin{proof}
Taylor series expansion of $B(L_f(x))$ about $L_f(x)=0$ yields
\begin{align*}
B(L_f(x)) &= \left(1 - \dfrac{1}{2}L_f(x)^2 - \dfrac{1}{2}L_f(x)^3 + O(L_f(x)^4)\right)\left(1- L_f(x) + \dfrac{1}{2}L_f(x)^2 + O(L_f(x)^4)\right)\\
& = 1 - L_f(x) + \dfrac{1}{2}L_f(x)^2 - \dfrac{1}{2}L_f(x)^2 + \dfrac{1}{2}L_f(x)^3 - \dfrac{1}{2}L_f(x)^3 + O(L_f(x)^4)\\
& = 1 - L_f(x) + O(L_f(x)^4).
\end{align*}
Hence, $C_*(L_f(x))$ has the expansion
\begin{align*}
C_*(L_f(x)) &= \dfrac{2}{1 + B(L_f(x))}\\
& = \dfrac{1}{1 - \frac{L_f(x)}{2} + O(L_f(x)^4)}\\
& = 1 + \dfrac{L_f(x)}{2} + \dfrac{L_f(x)^2}{4} + \dfrac{1}{8}L_f(x)^3 + O(L_f(x)^4).
\end{align*}
Similarly, Halley's correction function satisfies
\begin{align*}
C_H(L_f(x)) = \dfrac{2}{2-L_f(x)} = 1 + \dfrac{L_f(x)}{2} + \dfrac{L_f(x)^2}{4} + \dfrac{1}{8}L_f(x)^3 + O(L_f(x)^4).
\end{align*}
Therefore, $C_*(L_f(x)) = C_H(L_f(x)) + O(L_f(x)^4)$.
This completes the proof.
\end{proof}

\begin{rem}
Considering the Taylor expansions in Proposition~\ref{pr2} up to the fourth-order term, we obtain
\begin{align*}
C_*(L_f(x))
&= 1 + \dfrac{L_f(x)}{2}+\dfrac{L_f(x)^2}{4} + \dfrac{L_f(x)^3}{8} +O(L_f(x)^5),\\
C_H(L_f(x))
&= 1 + \dfrac{L_f(x)}{2}+\dfrac{L_f(x)^2}{4} +\dfrac{L_f(x)^3}{8}
+\dfrac{L_f(x)^4}{16}
+O(L_f(x)^5).
\end{align*}
In particular, the coefficient of $L_f(x)^4$ in $C_*(L_f(x))$ is zero. 
Then it holds that
\begin{align*}
C_*(L_f(x))-C_H(L_f(x)) = -\dfrac{1}{16}L_f(x)^4 + O(L_f(x)^5).
\end{align*}
Thus, the two correction functions have identical coefficients through the $L_f(x)^3$ term, and their difference first appears at order $L_f(x)^4$.
\end{rem}

Subsequently, we determine the local order of convergence of the proposed method.
\begin{thm}\label{thm1}
Let $\alpha$ be a simple root of $f(x)$, and suppose that $f(x)$ is sufficiently smooth around $\alpha$.
Suppose that the initial guess $x_0$ is sufficiently close to $\alpha$.
Then the method defined by \eqref{im} has at least third-order convergence and satisfies the error equation
\begin{align*}
e_{n+1} = (c_2^2 - c_3)e_n^3 + O(e_n^4),
\end{align*}
where $e_n=x_n-\alpha$ denotes the error at the $n$th iteration and
\begin{align*}
c_j = \dfrac{f^{(j)}(\alpha)}{j!f'(\alpha)},
\quad j=2,3.
\end{align*}
\end{thm}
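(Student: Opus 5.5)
The approach is to expand everything in powers of $e_n$ and to use Proposition~\ref{pr2} to replace $C_*$ by its cubic Taylor polynomial. The Newton step and $L_f$ are expanded about the simple root $\alpha$.

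\textbf{Step 1 (expansions in $e_n$).} Write $f(x_n)=f'(\alpha)\bigl(e_n+c_2e_n^2+c_3e_n^3+O(e_n^4)\bigr)$ and expand the derivatives similarly:
\begin{align*}
f'(x_n)&=f'(\alpha)\bigl(1+2c_2e_n+3c_3e_n^2+O(e_n^3)\bigr),\\
f''(x_n)&=f'(\alpha)\bigl(2c_2+6c_3e_n+O(e_n^2)\bigr).
\end{align*}
From these one obtains the Newton quotient
\begin{align*}
\frac{f(x_n)}{f'(x_n)}=e_n-c_2e_n^2+2(c_2^2-c_3)e_n^3+O(e_n^4),
\end{align*}
and, multiplying out,
\begin{align*}
L_f(x_n)=2c_2e_n+(6c_3-6c_2^2)e_n^2+O(e_n^3).
\end{align*}
In particular $L_f(x_n)=O(e_n)$, so $L_f(x_n)\to0$.

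\textbf{Step 2 (reduce to Halley's correction).} By Proposition~\ref{pr2},
\begin{align*}
C_*(L_f(x_n))=1+\tfrac12L_f(x_n)+\tfrac14L_f(x_n)^2+O(L_f(x_n)^3).
\end{align*}
Only terms up to $O(e_n^2)$ of $C_*$ matter, since it multiplies a quantity of order $e_n$. Substituting Step 1 gives
\begin{align*}
C_*(L_f(x_n))&=1+c_2e_n+\bigl(3c_3-3c_2^2+c_2^2\bigr)e_n^2+O(e_n^3)\\
&=1+c_2e_n+(3c_3-2c_2^2)e_n^2+O(e_n^3).
\end{align*}

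\textbf{Step 3 (error equation).} Compute $e_{n+1}=e_n-C_*(L_f(x_n))\,f(x_n)/f'(x_n)$. The product of the two expansions has the following terms:
\begin{itemize}
\item $e_n$-coefficient: $1$.
\item $e_n^2$-coefficient: $-c_2+c_2=0$.
\item $e_n^3$-coefficient: $2c_2^2-2c_3-c_2^2+3c_3-2c_2^2=c_3-c_2^2$.
\end{itemize}
Hence
\begin{align*}
e_{n+1}=(c_2^2-c_3)e_n^3+O(e_n^4).
\end{align*}
This matches the known Halley error equation, as expected from Proposition~\ref{pr2}.

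\textbf{Step 4 (convergence).} The error equation gives $|e_{n+1}|\le K|e_n|^3$ near $\alpha$, with $K$ depending on bounds for the smooth quantities. The constant $K$ exists because $C_*$ is smooth, and indeed analytic, on all of $\mathbb{R}$ by Proposition~\ref{pr1}, so the $O$-terms are uniform. Choosing $x_0$ with $K|e_0|^2<1$ then yields $|e_n|\to0$ inductively. Together with the error equation, this gives at least cubic order.

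\textbf{Main obstacle.} The main risk is bookkeeping in the $e_n^2$-coefficient of $L_f(x_n)$, which feeds the cubic term. I will verify it by writing
\begin{align*}
L_f(x_n)=\frac{f(x_n)}{f'(x_n)}\cdot\frac{f''(x_n)}{f'(x_n)}
\end{align*}
and using the already computed Newton quotient, which reduces the error-prone multiplications.
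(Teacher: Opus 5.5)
Your proposal is correct and follows the paper's proof essentially line by line. It uses the same Taylor expansions of $f$, $f'$, $f''$, the same $L_f(x_n)=2c_2e_n-6(c_2^2-c_3)e_n^2+O(e_n^3)$ and $C_*(L_f(x_n))=1+c_2e_n+(3c_3-2c_2^2)e_n^2+O(e_n^3)$ taken from the expansion in Proposition~\ref{pr2}, and the same cancellation giving $(c_2^2-c_3)e_n^3$. Your Step 4, which turns the error equation into an actual convergence argument via $|e_{n+1}|\le K|e_n|^3$, is a small addition the paper leaves implicit. The smoothness of $C_*$ it relies on follows from the explicit formula with nonvanishing denominators rather than from Proposition~\ref{pr1} alone, but this is harmless.
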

\begin{proof}
By using Taylor’s expansion about $\alpha$, we have
\begin{align*}
&f(x_n) = f'(\alpha)\left[e_n + c_2e_n^2+ c_3e_n^3 + O(e_n^4)\right],\\
&f'(x_n) = f'(\alpha)\left[1 + 2c_2e_n+ 3c_3e_n^2 + O(e_n^3)\right],\\
&f''(x_n) = f'(\alpha)\left[2c_2 + 6c_3e_n + O(e_n^2)\right].
\end{align*}
From the above equations, we obtain
\begin{align*}
&f(x_n)f''(x_n) = f'(\alpha)^2 \left[2c_2e_n+(2c_2^2+6c_3)e_n^2+O(e_n^3) \right],\\
&f'(x_n)^2 = f'(\alpha)^2 \left[1 + 4c_2e_n + (4c_2^2+6c_3)e_n^2+O(e_n^3)
\right].
\end{align*}
Hence, $L_f(x_n)$ can be written in the following form:

\begin{align*}
L_f(x_n) &= \dfrac{f(x_n)f''(x_n)}{f'(x_n)^2}\\
&= \left[2c_2e_n+(2c_2^2+6c_3)e_n^2+O(e_n^3)
\right] \left[1-4c_2e_n+O(e_n^2)\right]\\
&= 2c_2e_n-6(c_2^2-c_3)e_n^2+O(e_n^3).
\end{align*}
Moreover, we have
\begin{align}\label{h1}
\dfrac{f(x_n)}{f'(x_n)} = e_n - c_2e_n^2 + 2(c_2^2-c_3)e_n^3 + O(e_n^4).
\end{align}
From the proof of Proposition~\ref{pr2}, the Taylor expansion of $C_*(L_f(x))$ about $L_f(x)=0$ is
\begin{align*}
C_*(L_f(x)) = 1 + \dfrac{1}{2}L_f(x) + \dfrac{1}{4}L_f(x)^2 + O(L_f(x)^3).
\end{align*}
Substituting $x=x_n$ into the above expansion gives
\begin{align}\label{h2}
C_*(L_f(x_n)) &= 1 + \dfrac{1}{2}L_f(x_n) + \dfrac{1}{4}L_f(x_n)^2 + O(L_f(x_n)^3) \notag\\
&= 1 + c_2e_n + (-2c_2^2 + 3c_3)e_n^2 + O(e_n^3).
\end{align}
By substituting \eqref{h1} and \eqref{h2} into \eqref{im}, we obtain
\begin{align*}
e_{n+1} &= e_n - C_*(L_f(x_n))\dfrac{f(x_n)}{f'(x_n)}\\
&= e_n - \left[1 + c_2e_n + (-2c_2^2 + 3c_3)e_n^2 + O(e_n^3)\right]\left[e_n - c_2e_n^2 + 2(c_2^2-c_3)e_n^3 + O(e_n^4)\right]\\
&= (c_2^2 -c_3)e_n^3 + O(e_n^4).
\end{align*}
This completes the proof.
\end{proof}

Finally, we investigate the global convergence of the proposed method.
We begin with a lemma that will be used to establish the monotonicity of the iteration function.
For the proof, see Appendix~\ref{app:lem1}.

\begin{lem}\label{lem1}
For all $L_f(x)\in\mathbb{R}$, it holds that
\begin{align*}
\dfrac{dC_*(L_f(x))}{dL_f(x)} > 0.
\end{align*}
Moreover, if $L_f(x) \geq -9/2,$
then $\Phi(x) \geq 0$, where
\begin{align*}
\Phi(x) = 1 - C_*(L_f(x))(1-L_f(x)) - L_f(x)\left(1-\dfrac{L_f(x)}{2}\right)\dfrac{dC_*(L_f(x))}{dL_f(x)}.
\end{align*}
\end{lem}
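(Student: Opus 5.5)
The plan is to write $L=L_f(x)$ throughout, to treat $C_*$, $B$, $A$, $R$ as functions of the real variable $L$, and to reduce both claims to sign conditions on explicit polynomials.

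\emph{Monotonicity.} Since $C_*=2/(1+B)$, we have $dC_*/dL=-2B'(L)/(1+B)^2$, so it suffices to show $B'(L)<0$ for all $L$. Write $A=N/D$ with $N=1-L+\tfrac12L^2$ and $D=1-L+L^2$. A direct computation gives
\begin{align*}
N'D-ND' = \tfrac12 L(L-2), \qquad A'=\dfrac{L(L-2)}{2D^2}.
\end{align*}
Differentiating $R=\sqrt{1+L^2}-L$ gives $R'=-R/\sqrt{1+L^2}$. Hence
\begin{align*}
B'=A'R+AR'=\dfrac{R}{2D^2\sqrt{1+L^2}}\Bigl[L(L-2)\sqrt{1+L^2}-2ND\Bigr].
\end{align*}
We must therefore show $L(L-2)\sqrt{1+L^2}<2ND$.

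On $0\le L\le 2$ the left side is $\le 0$, while $2ND>0$ by the positivity shown in Section~\ref{sec:2}, so the inequality holds there. Outside $[0,2]$ both sides are positive, and we square: we need
\begin{align*}
4N^2D^2-L^2(L-2)^2(1+L^2)>0.
\end{align*}
This is a polynomial of degree $8$ with leading coefficient $1-1=0$, so it reduces to lower degree. I would expand it and exhibit it as a sum of squares, or at least as strictly positive on $(-\infty,0)\cup(2,\infty)$. An alternative is the substitution $L=(1-R^2)/(2R)$ used in the proof of Proposition~\ref{pr_compare}, which removes the square root entirely.

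\emph{Nonnegativity of $\Phi$.} Here I would use the same substitution $L=(1-R^2)/(2R)$, where $R\in(0,\infty)$ and $R$ is decreasing in $L$. Under it, $\sqrt{1+L^2}=(1+R^2)/(2R)$, and $C_*$, $dC_*/dL$ and $L$ all become rational functions of $R$. The condition $L\ge -9/2$ corresponds to $0<R\le R_0:=(9+\sqrt{85})/2$.

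Clearing the positive denominators $D$, $1+B$ and powers of $R$ turns $\Phi\ge0$ into $P(R)\ge0$ on $(0,R_0]$ for some explicit polynomial $P$. By Proposition~\ref{pr2} and Theorem~\ref{thm1}, $\Phi$ is essentially the derivative of the iteration function in a normalized form. Since $C_*$ agrees with $C_H$ through $O(L^3)$, and Halley's choice makes the analogous expression vanish identically, I expect $\Phi=O(L^3)$ or higher near $L=0$. Consequently $P(R)$ should contain a factor $(R-1)^k$, possibly with $k=4$ as in Proposition~\ref{pr_compare}. I would factor this out and study the remaining cofactor $Q(R)$.

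\emph{Main obstacle.} The hardest step is the sign analysis of $Q$ on $(0,R_0]$. The threshold $-9/2$ suggests that $Q$ (or $\Phi$ itself) changes sign somewhere below $L=-9/2$, and that $-9/2$ is a convenient rational point inside the region of positivity rather than an exact root. I would first try to write $Q$ as a sum of terms that are manifestly positive for $0<R\le R_0$, grouping them via $R_0-R\ge0$. If that fails, I would locate the real roots of $Q$ with a Sturm sequence, or bound them using Descartes' rule of signs on shifted variables, and check that none lies in $(0,R_0]$. The case $k$ even, with $(R-1)^k\ge0$, then gives $\Phi\ge0$, with equality only at $L=0$.
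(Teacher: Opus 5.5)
Your proposal is correct in outline. For $\Phi$ it takes a genuinely different route from the paper; for monotonicity the reduction is the same and only the closing step differs.

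\textbf{Monotonicity.} You reach exactly the paper's reduction. The paper's logarithmic derivative of $B$, multiplied by $2ND\sqrt{1+L^2}$, is your bracket. So both proofs need $L(L-2)\sqrt{1+L^2}<2ND$, which is trivial on $[0,2]$. Two details in your closing step need correcting.
\begin{itemize}
\item $L^2(L-2)^2(1+L^2)$ has degree $6$, so $4N^2D^2-L^2(L-2)^2(1+L^2)$ has degree $8$ with leading coefficient $1$, not $0$.
\item This polynomial equals $-1$ at $L=1$, so it cannot be a global sum of squares. Only your fallback (positivity off $[0,2]$) is available.
\end{itemize}
The paper avoids the expansion by comparing factors. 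For $L\notin[0,2]$ one has $0<L(L-2)<L^2-2L+2=2N$ and $0<\sqrt{1+L^2}<D$, and multiplying these finishes the argument.

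\textbf{Nonnegativity of $\Phi$.} The paper keeps the radical. It writes $\Phi$ as $L^3$ times a bracket containing $\sqrt{1+L^2}$, over a positive denominator, and then bounds the bracket in cases:
\begin{itemize}
\item for $L>0$, via $\sqrt{1+L^2}\ge 1$;
\item for $-9/2\le L<0$, by setting $t=-L$, rationalizing, and splitting at $t=1$, with a convexity argument on $[1,9/2]$.
\end{itemize}
Your substitution $L=(1-R^2)/(2R)$ removes the radical entirely, and your structural predictions are right. Halley's analogue of $\Phi$ vanishes identically, so $\Phi=O(L^4)$. The paper's bracket also vanishes at $L=0$, which hides the fourth power behind its displayed $L^3$.

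Carrying your plan out gives
\[
\Phi=\frac{(R-1)^4(R+1)^3\,Q(R)}{R(1+R^2)\,q(R)^2},\qquad Q(R)=-R^4+7R^3+23R^2-13R+4,
\]
where $q(R)=R^5+6R^4+10R^3-3R+2=8R^2(D+NR)>0$. Your ``main obstacle'' is then elementary, because
\[
Q(R)=-(R^2-9R-1)(R^2+2R-4)+21R+8 .
\]
On $(0,R_0]$ we have $R^2-9R-1\le 0$, so $Q\ge 21R+8>0$ whenever $R\ge\sqrt5-1$. For $0<R<\sqrt5-1$, write $Q=R^3(7-R)+(23R^2-13R+4)>0$; the quadratic has negative discriminant. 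No Sturm sequence is needed.

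\textbf{Trade-off.} The paper works in the natural variable with little expansion. It pays with ad hoc case splits and only reaches the convenient threshold $-9/2$. Your route costs a sizeable polynomial expansion, but it treats both signs of $L$ uniformly and yields the sharp threshold. $Q$ has a unique positive root $R^*\approx 9.32$, so $\Phi\ge 0$ exactly when $L\ge (1-R^{*2})/(2R^*)\approx -4.61$.
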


To simplify the notation, let us denote
\begin{align*}
L_{f'}(x) = \dfrac{f'(x)f^{(3)}(x)}{f''(x)^2}.
\end{align*}
From Lemma~\ref{lem1}, we obtain the following sufficient condition for the global convergence of the proposed method.

\begin{thm}\label{thm2}
Let $\alpha\in(a,b)$ be a simple root of $f$, where $f\in C^3([a,b])$.
Suppose that
\begin{align}\label{a1}
f'(x) \neq 0, \quad f''(x) \neq 0, \quad L_f(x) \geq -\dfrac{9}{2}, \quad L_{f'}(x) \leq \dfrac{3}{2}
\end{align}
for all $x\in[a,b]$.
Then, for any initial guess $x_0 \in [a,b]$ satisfying $f(x_0)f'(x_0) > 0$, the sequence $\{x_n\}$ defined by \eqref{im} decreases monotonically and converges to $\alpha$.
If $f(x_0)f'(x_0) < 0$, then $\{x_n\}$ increases monotonically and converges to $\alpha$.
\end{thm}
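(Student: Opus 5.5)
The plan is the classical monotone-convergence argument for Halley-type methods. Write the iteration function
\begin{align*}
F(x) = x - C_*(L_f(x))\dfrac{f(x)}{f'(x)},
\end{align*}
compute $F'(x)$, and show that $F'(x)\geq 0$ on $[a,b]$ under \eqref{a1}. Then $F$ is nondecreasing with fixed point $\alpha$. Together with a sign argument showing that each step moves toward $\alpha$ without overshooting, this gives a monotone bounded sequence. Its limit is a fixed point of $F$, and since $C_*>0$ by Proposition~\ref{pr1}, that limit must be a zero of $f$, hence $\alpha$.

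\textbf{Step 1: the derivative of $F$.} Differentiating gives
\begin{align*}
\left(\dfrac{f}{f'}\right)' = 1 - L_f, \qquad
L_f' = \dfrac{f''}{f'}\left(1 - 2L_f + L_f L_{f'}\right).
\end{align*}
The second identity follows from $L_f = ff''/f'^2$, since $L_f' = \bigl(f'f'' + ff''' \bigr)/f'^2 - 2ff''^2/f'^3$ and $ff'''/f'^2 = L_f L_{f'}\, f''/f'$. Using $\frac{f}{f'}\cdot\frac{f''}{f'} = L_f$, this yields
\begin{align*}
F'(x) = 1 - C_*(1-L_f) - L_f\left(1-2L_f+L_fL_{f'}\right)\dfrac{dC_*}{dL_f}.
\end{align*}

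\textbf{Step 2: nonnegativity of $F'$.} Since $dC_*/dL_f>0$ by Lemma~\ref{lem1}, $F'$ is affine in $L_{f'}$ with coefficient $-L_f^2\,dC_*/dL_f\leq 0$. Hence $L_{f'}\leq 3/2$ gives $L_f L_{f'}L_f \leq \frac32 L_f^2$, and therefore
\begin{align*}
F'(x) \geq 1 - C_*(1-L_f) - L_f\left(1-\dfrac{L_f}{2}\right)\dfrac{dC_*}{dL_f} = \Phi(x) \geq 0,
\end{align*}
where the final inequality uses $L_f\geq -9/2$ and Lemma~\ref{lem1}. So $F$ is nondecreasing on $[a,b]$. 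I expect this to be the main step, and it is exactly what the form of $\Phi$ in Lemma~\ref{lem1} is designed for; the only care needed is getting $L_f'$ right.

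\textbf{Step 3: monotonicity and convergence.} Since $f'$ and $f''$ do not vanish, $f$ is strictly monotone with the unique root $\alpha$ in $[a,b]$. Suppose $f(x_0)f'(x_0)>0$. Then $x_0$ lies on the side where $f/f'>0$, which means $x_0>\alpha$ (by monotonicity of $f$, the sign of $f(x)/f'(x)$ equals the sign of $x-\alpha$). By induction, assume $x_n>\alpha$ with $x_n\in[a,b]$. Then:
\begin{itemize}
\item $x_{n+1}=F(x_n)\geq F(\alpha)=\alpha$, by Step 2 and the mean value theorem on $[\alpha,x_n]\subset[a,b]$;
\item $x_{n+1}<x_n$, because $C_*>0$ and $f(x_n)/f'(x_n)>0$.
\end{itemize}
So $x_{n+1}\in[\alpha,x_n]\subset[a,b]$, which keeps the iterates in the domain. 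If some $x_{n+1}=\alpha$, the sequence is constant from then on.

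The sequence is therefore decreasing and bounded below by $\alpha$, so it has a limit $\xi\in[\alpha,x_0]$. Continuity of $F$ on $[a,b]$ (which needs $f\in C^3$ only for differentiability of $F$; $F$ itself is continuous since $f\in C^2$ and $f'\neq 0$) gives $\xi = F(\xi)$. Thus $C_*(L_f(\xi))f(\xi)/f'(\xi)=0$, and since $C_*>0$ this forces $f(\xi)=0$, i.e.\ $\xi=\alpha$. The case $f(x_0)f'(x_0)<0$ is symmetric: $x_0<\alpha$, and the same argument gives an increasing sequence bounded above by $\alpha$.
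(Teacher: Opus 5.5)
Your proposal is correct and follows essentially the same route as the paper. In both, $F'(x)=\Phi(x)+\frac{dC_*}{dL_f}L_f^2\left(\frac{3}{2}-L_{f'}\right)\geq 0$ makes the iteration function nondecreasing with fixed point $\alpha$; a sign argument then places $x_0$ on the correct side of $\alpha$; and the resulting monotone bounded sequence has a limit that $C_*>0$ forces to be the unique root. Your explicit check that the iterates stay in $[\alpha,x_n]\subset[a,b]$, and your treatment of a possible $x_{n+1}=\alpha$, are minor tidy-ups the paper leaves implicit.
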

\begin{proof}
We define the iteration function of the proposed method by
\begin{align*}
F_*(x) = x - C_*(L_f(x))\dfrac{f(x)}{f'(x)}.
\end{align*}
From the definitions of $L_f(x)$ and $L_{f'}(x)$, we have
\begin{align*}
& \dfrac{f''(x)}{f'(x)}L_f(x)L_{f'}(x) = \dfrac{f''(x)}{f'(x)}\dfrac{f(x)f''(x)}{f'(x)^2}\dfrac{f'(x)f^{(3)}(x)}{f''(x)^2} = \dfrac{f(x)f^{(3)}(x)}{f'(x)^2},\\
& \dfrac{f''(x)}{f'(x)}L_f(x) =\dfrac{f''(x)}{f'(x)}\dfrac{f(x)f''(x)}{f'(x)^2} = \dfrac{f(x)f''(x)^2}{f'(x)^3}.
\end{align*}
Differentiating $L_f(x)$ with respect to $x$ gives
\begin{align*}
\dfrac{d L_f(x)}{dx} &= \dfrac{d}{dx}\dfrac{f(x)f''(x)}{f'(x)^2}\\
&= \dfrac{\left[f'(x)f''(x) + f(x)f^{(3)}(x)\right]f'(x)^2 - f(x)f''(x)\cdot 2f'(x)f''(x)}{f'(x)^4}\\
&= \dfrac{f''(x)}{f'(x)} + \dfrac{f(x)f^{(3)}(x)}{f'(x)^2} - 2 \dfrac{f(x)f''(x)^2}{f'(x)^3}\\
&= \dfrac{f''(x)}{f'(x)}
\left[1 + L_f(x)(L_{f'}(x) - 2)\right].
\end{align*}
Moreover, it holds that
\begin{align*}
\dfrac{d}{dx}\dfrac{f(x)}{f'(x)} = 1 - L_f(x).
\end{align*}
By differentiating $F_*(x)$ with respect to $x$, we obtain
\begin{align*}
\dfrac{dF_*(x)}{dx} &= 1 - \dfrac{dC_*(L_f(x))}{dx}\dfrac{f(x)}{f'(x)} - C_*(L_f(x))\dfrac{d}{dx}\dfrac{f(x)}{f'(x)}\\
&= 1 - \dfrac{dC_*(L_f(x))}{dL_f(x)}\dfrac{dL_f(x)}{dx}\dfrac{f(x)}{f'(x)} - C_*(L_f(x))(1 - L_f(x))\\
&= 1  - C_*(L_f(x))(1 - L_f(x)) - \dfrac{dC_*(L_f(x))}{dL_f(x)}L_f(x)\left[1 + L_f(x)(L_{f'}(x) - 2)\right]\\
&= 1 - C_*(L_f(x))(1 - L_f(x)) - \dfrac{dC_*(L_f(x))}{dL_f(x)}L_f(x)\left[\left(1 - \dfrac{L_f(x)}{2}\right) + L_f(x)\left(L_{f'}(x) - \dfrac{3}{2}\right) \right]\\
&= 1 - C_*(L_f(x))(1 - L_f(x)) - \dfrac{dC_*(L_f(x))}{dL_f(x)}L_f(x)\left(1 - \dfrac{L_f(x)}{2}\right)\\
&\quad - \dfrac{dC_*(L_f(x))}{dL_f(x)}L_f(x)^2\left(L_{f'}(x) - \dfrac{3}{2}\right)\\
&= \Phi(x) + \dfrac{dC_*(L_f(x))}{dL_f(x)}L_f(x)^2\left(\dfrac{3}{2} - L_{f'}(x)\right).
\end{align*}
From \eqref{a1} and Lemma~\ref{lem1},
\begin{align*}
\dfrac{dF_*(x)}{dx}\geq0
\end{align*}
for all $x\in[a,b]$. 
Hence, $F_*(x)$ is nondecreasing on $[a,b]$.
Since $f(\alpha)=0$, we also have $F_*(\alpha) = \alpha$.
Suppose first that $f(x_0)f'(x_0) > 0$.
Note that since $f'(x)\neq0$ for all $x\in[a,b]$, the sign of $f'(x)$ is constant on $[a,b]$. 
By the mean value theorem, there exists a $\xi \in (a,b)$ between $x_0$ and $\alpha$ such that
\begin{align}\label{mean}
\dfrac{f(x_0) - f(\alpha)}{x_0 - \alpha} = f'(\xi) \quad \Leftrightarrow \quad f(x_0) = f'(\xi)(x_0 - \alpha).
\end{align}
Multiplying both sides of \eqref{mean} by $f'(x_0) \neq 0$ gives
\begin{align*}
f(x_0)f'(x_0) = f'(\xi)f'(x_0)(x_0 - \alpha).
\end{align*}
It follows from $f'(\xi)f'(x_0) >0$ and $f(x_0)f'(x_0) > 0$ that $x_0 > \alpha$.
Because $F_*(x)$ is nondecreasing,
\begin{align*}
x_1 = F_*(x_0) \geq F_*(\alpha) = \alpha.
\end{align*}
Additionally, we have
\begin{align*}
x_1 = x_0 - C_*(L_f(x_0)) \dfrac{f(x_0)}{f'(x_0)} < x_0
\end{align*}
from $C_*(L_f(x_0))>0$ and $f(x_0)/f'(x_0) > 0$.
Therefore, $\alpha \leq x_1 < x_0$ is satisfied.
By induction, it is easily shown that $\alpha\leq x_{n+1}<x_n$ for all $n \in \mathbb{N}$.
Hence, ${x_n}$ is monotonically decreasing and bounded below, and therefore converges to some $r \geq \alpha$.

Taking the limit as $n\to\infty$ in \eqref{im}, we obtain
\begin{align*}
r = r - C_*(L_f(r))\dfrac{f(r)}{f'(r)} \quad \Leftrightarrow \quad C_*(L_f(r))\dfrac{f(r)}{f'(r)} = 0.
\end{align*}
From $C_*(L_f(r))>0$ and $f'(r) \neq 0$, we have $f(r) = 0$.
Since $f'(x)\neq0$ on $[a,b]$, the function $f(x)$ is strictly monotone on this interval. 
This implies that $\alpha$ is the unique root of $f(x)$ in $[a,b]$, and hence $r = \alpha$.

If $f(x_0)f'(x_0)<0$, the same argument gives $x_0 < \alpha$ and $x_n<x_{n+1} \leq \alpha$ for all $n \in \mathbb{N}$.
Thus, ${x_n}$ is increasing and bounded above. Taking the limit as above shows that it converges to $\alpha$.
This completes the proof.
\end{proof}

\begin{rem}
A global convergence result for Halley's method given in \cite{RefB20} is established under the conditions
\begin{align*}
f'(x) \neq 0, \quad f''(x) \neq 0, \quad L_f(x) < \dfrac{1}{2}, \quad L_{f'}(x) \leq \dfrac{3}{2}
\end{align*}
for all $x\in[a,b]$.
On the other hand, Theorem~\ref{thm2} guarantees the monotonic convergence of the proposed method under \eqref{a1}.
Thus, the two results impose the same conditions on $f'(x)$, $f''(x)$, and $L_{f'}(x)$, while the condition on $L_f(x)$ is different.
In particular, the sufficient condition for the proposed method imposes no upper bound on $L_f(x)$, whereas the corresponding result for Halley's method requires $L_f(x) < 1/2$.
\end{rem}

\section{Numerical results}\label{sec:4}
In this section, we examine the effectiveness of the proposed method through several numerical examples.
All numerical computations are performed in double precision using MATLAB R2026a.
\subsection{Numerical experiments}\label{sec:41}
In this subsection, we apply the Halley, Chebyshev, super-Halley, Fang, Ostrowski, Noor, Extended Newton, Padé-type, and proposed methods to the following test functions and compare their convergence performance over a wide range of initial guesses:
\begin{itemize}
    \item $f_1(x) = (x-1)^2-\log(x+1), \quad
    \alpha_1 \approx 0.412391172023885, \ 
    \alpha_2 \approx 2.057103549994738$.
    
    \item $f_2(x) = \cos x-x, \quad
    \alpha \approx 0.739085133215161$.
    
    \item $f_3(x) = x^3 + 4x^2 - 10, \quad
    \alpha \approx 1.365230013414097$.
    
    \item $f_4(x) = (x-1)^3-1, \quad
    \alpha = 2$.
    
    \item $f_5(x) = e^{-x^2+x+2}-1, \quad
    \alpha_1 = -1, \ 
    \alpha_2 = 2$.
    
    \item $f_6(x) = \sin^2 x-x^2+1, \quad
    \alpha_1 \approx -1.404491648215341, \ 
    \alpha_2 \approx 1.404491648215341$.
    
    \item $f_7(x) = (x-3)e^x+1, \quad
    \alpha_1 \approx -1.505241495792883, \ 
    \alpha_2 \approx 2.947530902542285$.
\end{itemize}
The range of initial guesses is set to $x_0\in[0,5]$ for $f_1$ and $x_0\in[-5,5]$ for $f_2,\ldots,f_7$. 
Each interval is partitioned into 100 equal subintervals, and the resulting 101 points are used as initial guesses for each method.
For each test function, convergence is declared when
\begin{align*}
|f_i(x_{n+1})| \leq 10^{-12}, \quad i = 1,\ldots,7.
\end{align*}
The maximum number of iterations is set to $N_{\max}=100$. 
Convergence is considered unsuccessful if the stopping criterion is not satisfied within 100 iterations, if the iteration becomes undefined over the real numbers, or if \texttt{NaN} or \texttt{Inf} occurs. 
For $f_1$, $f_5$, $f_6$, and $f_7$, which have multiple roots, convergence to any root is regarded as successful.

To evaluate the robustness of each method with respect to the choice of initial guess, we use the convergence success rate (CSR) and the average number of iterations (ANI).
Let $M$ denote the total number of initial guesses and $M_{\mathrm{c}}$ the number of initial guesses for which convergence is successful. 
CSR is defined by
\begin{align*}
\mathrm{CSR} = \dfrac{M_{\mathrm{c}}}{M} = \dfrac{M_{\mathrm{c}}}{101}.
\end{align*}
A higher CSR indicates that the method converges successfully for a broader range of initial guesses.
Let $N_j$ denote the number of iterations required for convergence from an initial guess $x_0^{(j)}$ for which convergence is achieved.
For each initial guess for which convergence is not achieved, $N_{\max}$ is assigned as the iteration count.
Then ANI is defined as
\begin{align*}
\mathrm{ANI}
= \dfrac{1}{M}\left[\sum_{j\in\mathcal{S}}N_j + (M -M_{\mathrm{c}})N_{\max}\right],
\end{align*}
where $\mathcal{S}$ denotes the index set of initial guesses for which convergence is successful.
Since ANI assigns the maximum iteration count $N_{\max}$ as a penalty to initial guesses for which convergence is not achieved, it reflects not only the iterative efficiency of successful convergence but also the effect of convergence failures.
CSR and ANI obtained for each method are presented in Table~\ref{table2}.
CSR is reported as a percentage.

\begin{table}[t]
\centering
\caption{Numerical results for different methods applied to the test functions $f_1$ to $f_7$.}
\label{table2}
\resizebox{\textwidth}{!}{
\begin{tabular}{c|l|ccccccc|c}
\hline
 & Method
 & $f_1(x)$
 & $f_2(x)$
 & $f_3(x)$
 & $f_4(x)$
 & $f_5(x)$
 & $f_6(x)$
 & $f_7(x)$
 & Average\\
\hline

\multirow{9}{*}{CSR (\%)}
& Halley
& \textbf{100.00} & 94.06 & 95.05 & \textbf{99.01}
& \textbf{99.01} & \textbf{99.01} & \textbf{99.01}
& 97.88\\

& Chebyshev
& 94.06 & 36.63 & 98.02 & \textbf{99.01}
& 31.68 & \textbf{99.01} & 72.28
& 75.81\\

& super-Halley
& 98.02 & 87.13 & 98.02 & \textbf{99.01}
& 45.54 & \textbf{99.01} & 98.02
& 89.25\\

& Fang
& \textbf{100.00} & 59.41 & 26.73 & 13.86
& 57.43 & 73.27 & 82.18
& 58.98\\

& Ostrowski
& \textbf{100.00} & 65.35 & 49.50 & 50.50
& 59.41 & \textbf{99.01} & \textbf{99.01}
& 74.68\\

& Noor
& 94.06 & 34.65 & \textbf{99.01} & 98.02
& 29.70 & \textbf{99.01} & 71.29
& 75.11\\

& Extended Newton
& \textbf{100.00} & 73.27 & 81.19 & 90.10
& \textbf{99.01} & \textbf{99.01} & \textbf{99.01}
& 91.65\\

& Pad\'e-type
& \textbf{100.00} & 73.27 & \textbf{99.01} & \textbf{99.01}
& \textbf{99.01} & \textbf{99.01} & \textbf{99.01}
& 95.47\\

& Proposed
& \textbf{100.00} & \textbf{100.00} & 96.04 & \textbf{99.01}
& \textbf{99.01} & \textbf{99.01} & \textbf{99.01}
& \textbf{98.87}\\

\hline

\multirow{9}{*}{ANI}
& Halley
& 3.594 & 12.733 & 23.931 & 8.178
& \textbf{7.673} & 5.188 & 4.663
& 9.423\\

& Chebyshev
& 9.723 & 64.733 & \textbf{15.970} & 11.713
& 69.485 & 5.901 & 31.079
& 29.801\\

& super-Halley
& 4.960 & 21.089 & 17.030 & \textbf{6.812}
& 56.099 & 8.000 & 5.733
& 17.103\\

& Fang
& \textbf{2.772} & 42.683 & 74.099 & 86.554
& 44.733 & 29.109 & 20.861
& 42.973\\

& Ostrowski
& 3.347 & 36.980 & 52.178 & 51.465
& 42.772 & \textbf{4.931} & \textbf{4.000}
& 27.953\\

& Noor
& 9.416 & 66.495 & 16.020 & 14.089
& 71.228 & 12.317 & 31.693
& 31.608\\

& Extended Newton
& 3.743 & 30.535 & 29.386 & 20.010
& 10.762 & 5.327 & 5.010
& 14.968\\

& Pad\'e-type
& 3.624 & 30.050 & 16.861 & 10.267
& 7.713 & 4.990 & 4.762
& 11.181\\

& Proposed
& 3.594 & \textbf{7.683} & 19.594 & 7.465
& 7.693 & 4.990 & 4.713
& \textbf{7.962}\\

\hline
\end{tabular}
}
\end{table}

Table~\ref{table2} shows that the proposed method achieves the highest average CSR and the lowest average ANI over the seven test functions. However, the proposed method does not necessarily attain the smallest ANI for each test function.
For example, Fang's method gives the best ANI for $f_1$, super-Halley's method for $f_4$, and Ostrowski's method for $f_6$ and $f_7$. 
But it should be noted that these methods exhibit substantially lower CSR values for some of the test functions.
In contrast, the proposed method achieves a CSR of at least $96\%$ for all test functions and attains the best values for both CSR and ANI for $f_2$.
This indicates that the proposed method provides a favorable balance between robustness with respect to the initial guess and iterative efficiency across the test functions.

The preceding numerical experiments confirm that both the proposed method and Halley’s method exhibit high convergence performance.
To further clarify the structural difference between the two methods, we examine the behavior near $L_f(x)=2$, where Halley’s correction function becomes singular.
Consider the test function
\begin{align*}
f_8(x) = \log x, \quad \alpha = 1,
\end{align*}
and apply Halley’s method and the proposed method from initial guesses $x_0$ chosen so that
\begin{align*}
L_f(x_0) = 1.9, 1.99, 1.9999, 1.99999999, 2.0, 2.00000001, 2.0001, 2.01, 2.1.
\end{align*}
The stopping criterion and the maximum number of iterations are set to $|f_8(x_{n+1})|\leq10^{-12}$ and $N_{\max}=100$, respectively.
Table~\ref{table3} shows the correction factor $C(L_f(x_0))$, the first iterate $x_1$, and the number of iterations $N$ required for convergence at each initial guess.
\begin{table}[t]
\centering
\caption{Numerical results for two methods applied to the test function $f_8$.}
\label{table3}
\begin{tabular}{c|ccc|ccc}
\hline
$L_f(x_0)$
& \multicolumn{3}{c|}{Halley}
& \multicolumn{3}{c}{Proposed}\\
& $C_H(L_f(x_0))$
& $x_1$
& $N$
& $C_*(L_f(x_0))$
& $x_1$
& $N$\\
\hline

$1.9$
& $2.0\times10^{1}$
& $5.833176$
& $5$
& $1.847548$
& $0.674606$
& $4$\\

$1.99$
& $2.0\times10^{2}$
& $54.541475$
& --
& $1.853492$
& $0.640890$
& $4$\\

$1.9999$
& $2.0\times10^{4}$
& $5413.817349$
& --
& $1.854096$
& $0.637223$
& $4$\\

$1.99999999$
& $2.0\times10^{8}$
& $5.41341164\times10^{7}$
& --
& $1.854102$
& $0.637186$
& $4$\\

$2.0$
& --
& --
& --
& $1.854102$
& $0.637186$
& $4$\\

$2.00000001$
& $-2.0\times10^{8}$
& $-5.41341132\times10^{7}$
& --
& $1.854102$
& $0.637186$
& $4$\\

$2.0001$
& $-2.0\times10^{4}$
& $-5413.005337$
& --
& $1.854108$
& $0.637149$
& $4$\\

$2.01$
& $-2.0\times10^{2}$
& $-53.729459$
& --
& $1.854702$
& $0.633492$
& $4$\\

$2.1$
& $-2.0\times10^{1}$
& $-5.020714$
& --
& $1.859726$
& $0.600701$
& $4$\\

\hline
\end{tabular}
\end{table}

Table~\ref{table3} shows that, for Halley’s method, both $|C_H(L_f(x_0))|$ and $|x_1|$ increase rapidly as $L_f(x_0) \rightarrow 2$, while the correction factor becomes negative for $L_f(x_0) > 2$ and is undefined at $L_f(x_0)=2$. 
In contrast, for the proposed method, $C_*(L_f(x_0)) \approx 1.854$ and $x_1$ remain finite near $L_f(x_0)=2$, and convergence is achieved in four iterations for all initial guesses. 
These results indicate that the proposed method avoids both the singularity and sign reversal of Halley’s correction function and exhibits stable iterative behavior near $L_f(x)=2$.

\subsection{Application}\label{sec:42}
In this subsection, we consider the following van der Waals equation \citep{RefAP10} to examine the applicability of the proposed method to a practical problem and its convergence performance with respect to the initial guess:
\begin{align}\label{fe}
\left(P + \dfrac{a}{V_m^2}\right)(V_m - b) = RT,
\end{align}
where $P$ is the pressure, $V_m$ is the molar volume, $R$ is the universal gas constant, $T$ is the absolute temperature, and $a$ and $b$ are specific parameters for each gas that represent molecular attractions and molecular repulsions, respectively.
Equation~\eqref{fe} can be rewritten as the following nonlinear equation with respect to $V_m$:
\begin{align*}
f(V_m) = V_m^3 - \left(b + \dfrac{RT}{P}\right)V_m^2 + \dfrac{a}{P}V_m - \dfrac{ab}{P} = 0.
\end{align*}
Following the parameter setting for carbon dioxide used in \cite{RefZ24}, we set
\begin{align*}
P = 100 \ \mathrm{atm}, \quad  R = 0.082057, \quad T = 500 \ \mathrm{K}, \quad a = 3.6073, \quad b = 0.042816.
\end{align*}
In this case, the nonlinear equation becomes
\begin{align*}
f_9(x) = x^3 - 0.45310x^2 + 0.036073x - 1.5445 \times 10^{-3} = 0.
\end{align*}
This equation has a unique physically admissible root, given by
\begin{align*}
\alpha \approx 0.3660879312575294.
\end{align*}

In this experiment, we use three initial guesses, $x_0=0.25$, $0.5$, and $1.0$.
The stopping criterion and the maximum number of iterations are set to the same values as those used in Section~\ref{sec:41}.
Since $x$ represents the molar volume, we restrict the iteration to the physically admissible region $x > b = 0.042816$.
Convergence is regarded as unsuccessful if $x_n\leq b$ occurs during the iteration, if the correction function is not defined in the real domain, or if the stopping criterion is not satisfied within the maximum number of iterations. Table~\ref{table4} presents the number of iterations $N$ required to satisfy the stopping criterion when the nine methods are applied to each initial guess.

\begin{table}[t]
\centering
\caption{Numerical results for different methods applied to the test function $f_9$.}
\label{table4}
\begin{tabular}{l|ccc}
\hline
Method
& $x_0=0.25$
& $x_0=0.5$
& $x_0=1.0$
\\
\hline

Halley
& -- & 4 & 5
\\

Chebyshev
& 17 & 4 & 6
\\

super-Halley
& -- & \textbf{3} & \textbf{4}
\\

Fang
& -- & -- & --
\\

Ostrowski
& -- & \textbf{3} & \textbf{4}
\\

Noor
& 27 & \textbf{3} & \textbf{4}
\\

Extended Newton
& -- & 4 & 5
\\

Pad\'e-type
& 20 & 4 & 5
\\

Proposed
& \textbf{9} & 4 & 5
\\

\hline
\end{tabular}
\end{table}

Table~\ref{table4} shows that, for $x_0=0.5$ and $1.0$, the proposed method satisfies the stopping criterion in the same number of iterations as Halley’s method. 
The super-Halley, Ostrowski, and Noor methods converge in one fewer iteration than the proposed method; thus, although the proposed method does not always achieve convergence in the fewest iterations, it exhibits comparable convergence performance in terms of iteration count. 
On the other hand, for $x_0=0.25$, the proposed method converges even though $|L_f(x_0)|\approx349.7462$ is extremely large, suggesting stable convergence behavior in this case.

\section{Concluding remarks}\label{sec:5}
In this paper, we proposed a new one-point iterative method based on a correction function that is positive and bounded on $\mathbb{R}$ while preserving the local behavior of Halley’s method.
A key feature of the proposed method is that its correction function matches that of Halley’s method to high order near the root and avoids both the singularity at $L_f(x) = 2$ and the sign reversal for $L_f(x) > 2$.
Moreover, we proved the third-order convergence of the proposed method and its global convergence under certain conditions.

In the numerical experiments, the proposed method achieved high convergence success rates and good iterative efficiency over a wide range of initial guesses and exhibited stable convergence behavior even near the singularity of Halley’s correction function. 
Furthermore, in the application to the van der Waals equation, the proposed method maintained convergence performance comparable to Halley’s method and also showed stable convergence behavior for an initial guess yielding a large value of $|L_f(x_0)|$.

\section*{Appendix}
\appendix
\section{Proof of Lemma~\ref{lem1}}\label{app:lem1}
\begin{proof}
Since $B(L_f(x))>0$, logarithmic differentiation of $B(L_f(x))$ with respect to $L_f(x)$ gives
\begin{align*}
\dfrac{1}{B(L_f(x))} \dfrac{dB(L_f(x))}{dL_f(x)}
&= \dfrac{2(L_f(x) - 1)}{L_f(x)^2 - 2L_f(x)+2}-\dfrac{2L_f(x) - 1}{L_f(x)^2 -L_f(x) + 1}
- \dfrac{1}{\sqrt{1 + L_f(x)^2}}\\
&= \dfrac{L_f(x)(L_f(x) - 2)}{(L_f(x)^2 - 2L_f(x)+2) (L_f(x)^2 - L_f(x)+1)} -\dfrac{1}{\sqrt{1 + L_f(x)^2}}.
\end{align*}
Suppose first that $0\leq L_f(x)\leq2$. 
Since $L_f(x)(L_f(x)-2) \leq 0$,
it holds that
\begin{align}\label{eq1}
\dfrac{dB(L_f(x))}{dL_f(x)} < 0.
\end{align}
Next, suppose that $L_f(x) > 2$. 
In this case, it follows from 
\begin{align*}
& L_f(x)^2 - 2L_f(x) + 2 = L_f(x)(L_f(x)-2) + 2 > L_f(x)(L_f(x) - 2),\\
& L_f(x)^2 - L_f(x) + 1 > L_f(x) + 1 > \sqrt{1 + L_f(x)^2}
\end{align*}
that
\begin{align*}
(L_f(x)^2-2L_f(x)+2)(L_f(x)^2-L_f(x)+1) > L_f(x)(L_f(x)-2)\sqrt{1+L_f(x)^2}.
\end{align*}
Hence, we have \eqref{eq1}.
Finally, suppose that $L_f(x)<0$. 
From
\begin{align*}
& L_f(x)^2 - 2L_f(x) + 2 = L_f(x)(L_f(x)-2) + 2 > L_f(x)(L_f(x)-2),\\
& L_f(x)^2 - L_f(x) + 1 > 1 - L_f(x) > \sqrt{1 + L_f(x)^2},
\end{align*}
we obtain
\begin{align*}
(L_f(x)^2-2L_f(x)+2)(L_f(x)^2-L_f(x)+1) > L_f(x)(L_f(x)-2)\sqrt{1+L_f(x)^2}.
\end{align*}
Hence, the same argument shows that \eqref{eq1} is satisfied for $L_f(x)<0$. 
Therefore, \eqref{eq1} holds for all $L_f(x)\in\mathbb{R}$.

On the other hand, differentiating
\begin{align*}
C_*(L_f(x)) = \dfrac{2}{1+B(L_f(x))}
\end{align*}
with respect to $L_f(x)$ gives
\begin{align*}
\dfrac{dC_*(L_f(x))}{dL_f(x)} = -\dfrac{2}{[1+B(L_f(x))]^2}\dfrac{dB(L_f(x))}{dL_f(x)}.
\end{align*}
It follows from \eqref{eq1} that
\begin{align*}
\dfrac{dC_*(L_f(x))}{dL_f(x)}>0
\end{align*}
for all $L_f(x)\in\mathbb{R}$.

Next, we examine the sign of $\Phi(x)$.
Substituting $C_*(L_f(x))$ and $\frac{dC_*(L_f(x))}{dL_f(x)}$ into $\Phi(x)$, we obtain
\begin{align*}
\Phi(x) = \dfrac{L_f(x)^3\left[\{4L_f(x)^2-9L_f(x)+8\}\sqrt{1+L_f(x)^2}+4L_f(x)^3-8L_f(x)^2+14L_f(x)-8\right]}{\sqrt{1+L_f(x)^2}\left[2\{L_f(x)^2-L_f(x)+1\}+\{L_f(x)^2-2L_f(x)+2\}\{\sqrt{1+L_f(x)^2}-L_f(x)\}\right]^2}.
\end{align*}
Since the denominator is positive for all $L_f(x)\in\mathbb{R}$, it suffices to determine the sign of the numerator.
Suppose first that $L_f(x)>0$.
Since
\begin{align*}
4L_f(x)^2-9L_f(x)+8
=
4\left(L_f(x)-\dfrac98\right)^2+\dfrac{47}{16}
>0
\end{align*}
and  $\sqrt{1+L_f(x)^2} \geq 1$, part of the numerator can be evaluated as
\begin{align*}
&\{4L_f(x)^2 - 9L_f(x) + 8\}\sqrt{1 + L_f(x)^2} + 4L_f(x)^3 - 8L_f(x)^2 + 14L_f(x) - 8\\
&\geq 4L_f(x)^3-4L_f(x)^2 + 5L_f(x)\\
&= L_f(x)\{4L_f(x)^2 - 4L_f(x) + 5\}\\
&= L_f(x) \left\{4\left(L_f(x) - \dfrac{1}{2}\right)^2 + 4\right\}\\
&>0.
\end{align*}
It follows from $L_f(x)^3>0$ that $\Phi(x)>0$ is satisfied.
Next suppose that $-9/2 \leq L_f(x) < 0$ and let $t=-L_f(x)$.
Then $0 < t \leq 9/2$, and we have
\begin{align*}
&\{4L_f(x)^2-9L_f(x)+8\}\sqrt{1+L_f(x)^2}
+4L_f(x)^3-8L_f(x)^2+14L_f(x)-8\\
&=(4t^2+9t+8)\sqrt{1+t^2} - (4t^3+8t^2+14t+8)\\
& = (4t^2+9t+8)\left(t+\dfrac{1}{\sqrt{1+t^2}+t}\right) -(4t^3+8t^2+14t+8)\\
& = t^2-6t-8 + \dfrac{4t^2+9t+8}{\sqrt{1+t^2}+t}.
\end{align*}
For $0 < t \leq 1$, we obtain $\sqrt{1 + t^2}>1$, and hence
\begin{align*}
t^2-6t-8 + \dfrac{4t^2+9t+8}{\sqrt{1+t^2}+t} &< t^2-6t-8 + \dfrac{4t^2+9t+8}{1+t}\\
&= \dfrac{t(t^2-t-5)}{1+t}\\
&< 0.
\end{align*}
For $1 \leq t \leq 9/2$, since $\sqrt{1 + t^2} > t$, the inequality holds:
\begin{align*}
t^2-6t-8 + \dfrac{4t^2 + 9t + 8}{\sqrt{1 + t^2} + t}
&< t^2 - 6t - 8 + \dfrac{4t^2 + 9t + 8}{2t}\\
&= t^2 - 4t - \dfrac{7}{2}+\dfrac{4}{t}.
\end{align*}
To simplify the notation, let us denote
\begin{align*}
h(t)=t^2 - 4t - \dfrac{7}{2} + \dfrac{4}{t}.
\end{align*}
Differentiating $h(t)$ twice with respect to $t$ gives
\begin{align*}
h''(t)= 2 + \dfrac{8}{t^3}>0.
\end{align*}
Since $h$ is convex on $[1,9/2]$, we have
\begin{align*}
h(t) \leq
\max\left\{h(1), h\left(\dfrac{9}{2}\right)\right\} 
= \max\left\{-\dfrac{5}{2}, -\dfrac{13}{36}\right\} < 0.
\end{align*}
Therefore,
\begin{align*}
(4t^2 + 9t + 8)\sqrt{1 + t^2} -(4t^3 + 8t^2 + 14t + 8) < 0
\end{align*}
is satisfied  for all $0 < t\leq 9/2$.
Since $L_f(x)^3<0$, we obtain $\Phi(x)>0$.

Finally, when $L_f(x)=0$, it is immediate that $\Phi(x)=0$.
This completes the proof.
\end{proof}
\end{document}